\numberwithin{equation}{section}
\theoremstyle{plain}
\newtheorem{lemma}{Lemma}[section]
\newtheorem{proposition}{Proposition}[section]
\newtheorem{definition}{Definition}[section]
\theoremstyle{remark}
\newtheorem{remark}{Remark}[section]
\newcommand{\assign}{:=}
\newcommand{\nocomma}{}
\newcommand{\tmop}[1]{\ensuremath{\operatorname{#1}}}
\journal{arXiv}
\begin{document}
%\linenumbers

\begin{frontmatter}

%% Title, authors and addresses

%% use the tnoteref command within \title for footnotes;
%% use the tnotetext command for theassociated footnote;
%% use the fnref command within \author or \address for footnotes;
%% use the fntext command for theassociated footnote;
%% use the corref command within \author for corresponding author footnotes;
%% use the cortext command for theassociated footnote;
%% use the ead command for the email address,
%% and the form \ead[url] for the home page:
%% \title{Title\tnoteref{label1}}
%% \tnotetext[label1]{}
%% \author{Name\corref{cor1}\fnref{label2}}
%% \ead{email address}
%% \ead[url]{home page}
%% \fntext[label2]{}
%% \cortext[cor1]{}
%% \address{Address\fnref{label3}}
%% \fntext[label3]{}

\title{Non-Analytic Solution to the Fokker-Planck Equation of Fractional Brownian Motion via Laplace Transforms\tnoteref{label1}}
\tnotetext[label1]{It is instructive to mention that portions of this paper was submitted to Mahidol University as part of a research project while the author was a student there.}

%% use optional labels to link authors explicitly to addresses:
%% \author[label1,label2]{}
%% \address[label1]{}
%% \address[label2]{}

\author{Visant Ahuja}

\address{Mahidol University International College, Mahidol University\\ Present address: D.S. Tower 1 Room 18A2, 98 Dangudom Soi, Sukhumvit 33 Road\\ Klongton, Wattana\\ Bangkok 10110, Thailand}%\fnref{label2}}
 %\fntext[label2]{Present address: D.S. Tower 1 Room 18A2, 98 Dangudom Soi, Sukhumvit 33 Road\\ Klongton, Wattana\\ Bangkok 10110, Thailand}
 \ead{visant@alumni.unc.edu}

\begin{abstract}
This paper derives the \emph{non-analytic} solution to the Fokker-Planck equation of fractional Brownian motion using the method of Laplace transform. Sequentially, by considering the fundamental solution of the non-analytic solution, this paper obtains the transition probability density function of the random variable that is described by the It{\^ o}'s stochastic ordinary differential equation of fractional Brownian motion. Furthermore, this paper applies the derived transition probability density function to the Cox-Ingersoll-Ross model governed by the fractional Brownian motion instead of the usual Brownian motion.

\end{abstract}

\begin{keyword}
fractional Brownian motion \sep Fokker-Planck equation \sep Laplace transform \sep CIR model
%% keywords here, in the form: keyword \sep keyword

%% PACS codes here, in the form: \PACS code \sep code
\MSC[2010] 60G22 \sep 91G30
%% MSC codes here, in the form: \MSC code \sep code
%% or \MSC[2008] code \sep code (2000 is the default)

\end{keyword}

\end{frontmatter}
\pagestyle{fancy}
\lhead{Probability Function of Fractional Brownian Motion} \chead{} \rhead{}
\renewcommand{\headrulewidth}{0.4pt}

%% \linenumbers

%% main text
\section{Introduction}\label{intro}

It was in the year 1951 when Feller first wrote his paper, \textit{Two Singular Diffusion Problems} \citep{Feller}, which aims to study the parabolic partial differential equation
\begin{eqnarray}
  u_t = (a x u)_{x x} - ((b x + c) u)_x \label{FellerPDE}, \,\, 0 < x < \infty, 
\end{eqnarray}
where $u = u (t, x)$ and $a, b, c$ are constants with $a > 0$. In his paper, he was able to successfully derive its \emph{analytic} solution along with proving its existence \emph{and} uniqueness. As it turns out, this parabolic partial differential equation is a specific form of the \textit{Fokker-Planck equation} of the diffusion process (or famously known as the Brownian motion). In this paper, we will refer to this specific form simply as the Fokker-Planck equation of Brownian motion. In statistical mechanics, the \textit{Fokker-Planck equation} is a partial differential equation [PDE] that describes the time evolution of the probability density function of the dynamics of a particle that is influenced by random processes (e.g., Brownian motion). Fokker-Planck equation is also known as the Kolmogorov forward equation. Due to the extensive applications of the Brownian motion, it is no doubt that Feller's paper, \textit{Two Singular Diffusion Problems}, have gained a lot of fame. One application that the author is interested in is its use in developing the fuzzy stochastic volatility model in the field of mathematical finance (e.g., see \citep{1}), where one of the equations in the original stochastic volatility model is described by the Fokker-Planck equation of Brownian motion. For the sake of generality, since Brownian motion is a special case of fractional Brownian motion [fBm], the author tries to extend the stochastic volatility model to the case where the Fokker-Planck equation of Brownian motion is {\emph{replaced}} by the Fokker-Planck equation of fBm. On one hand, the Fokker-Planck equation of fBm has already been derived by {\" U}nal \citep{Unal} (and this will be introduced in later sections). On the other hand, in order to develop a ``fuzzy" version of the extended stochastic volatility model, it is necessary to use the solution to the Fokker-Planck equation of fBm. Thus, it is compulsory that we find the solution to the Fokker-Planck equation of fBm before being able to develop the ``fuzzy'' version of the model, and this is the underlying motivation that the author writes this paper.

This paper can be outlined as follows. Section \ref{FPfBm} sets up the general form of the Fokker-Planck equation of fBm such that it has a similar set up as \citep{Feller}. Section \ref{somedef} gives the definitions and some preliminaries necessary before deriving the solution to the Fokker-Planck equation of fBm. Section \ref{derivation} will derive the general form of the non-analytic solution to the Fokker-Planck equation of fBm via Laplace transforms. Then, section \ref{proofs} will impose conditions of the general form of the non-analytic solution to the Fokker-Planck equation of fBm and prove its existence. Section \ref{solution} will provide the fundamental non-analytic solution to the Fokker-Planck equation of fBm based on proven claims in the preceding section. Section \ref{application} will apply the derived solution to the Cox-Ingersoll-Ross [CIR] model, where the Brownian motion in the model is replaced by fBm. Finally, section \ref{conclusion} will conclude this paper and provide some topics that may be further investigated.

\section{The Fokker-Planck Equation of Fractional Brownian Motion}\label{FPfBm}

We devote this section to the set up of the general form of the Fokker-Planck equation of fBm since it will be of major use in the next sections.

First, we note that {\" U}nal \citep{Unal} derives the Fokker-Planck Equation of fBm for \emph{vector-valued} variable by considering It{\^ o}'s stochastic ordinary differential equation of the form
\begin{eqnarray*}
d x_i = f_i (\mathbf{x}, t) d t + g_{i \alpha} (\mathbf{x}, t) d B^H_{\alpha}, \,\, 1 \leqslant i \leqslant n \text{; } 1 \leqslant \alpha \leqslant r.
\end{eqnarray*}
Then, from It{\^ o}'s formula for fBm for a {\emph{scalar}} function $h (\mathbf{x})$,
\begin{equation}
d h (\mathbf{x}) = \left( f_j  \frac{\partial h}{\partial x_j} + H t^{2 H - 1} g_{j \alpha} g_{k \alpha}  \frac{\partial^2 h}{\partial x_j \partial x_k} \right) d t + g_{j \alpha}  \frac{\partial h}{\partial x_j} d B_{\alpha}^H \label{scalarh},
\end{equation}
{\" U}nal derives, through the extensive use of properties of expectations, the Fokker-Planck equation for fBm as
\begin{equation}
\frac{\partial p}{\partial t} + \frac{\partial f_j p}{\partial x_j} - H t^{2 H - 1}  \frac{\partial^2 g_{j \alpha} g_{k \alpha} p}{\partial x_j \partial x_k} = 0, \label{genFPK}
\end{equation}
where
\begin{itemize}
\item $\mathbf{x} \in \mathbb{R}^n$ is a vector,
\item $p\assign p(\mathbf{x},t)$ is the probability density function,
\item $f_j \assign f_j (\mathbf{x}, t)$ is a drift vector,
\item $g_{i \alpha} (\mathbf{x}, t)$ is a diffusion matrix for all $i=j,k$,
\item $d B^H_{\alpha}$ is increment of fBm, and  
\item $H\in(0,1)$ is the Hurst parameter.
\end{itemize}

\subsection{The Fokker-Planck Equation of Fractional Brownian Motion for Scalar-Valued Variable}

The scope of this paper is to derive the non-analytic solution to the Fokker-Planck equation of fBm for \emph{scalar-valued} variable. Therefore, in this \emph{short} subsection, we present the Fokker-Planck equation of fBm for \emph{scalar-valued} variable.

In order to transform the variable in equation (\ref{genFPK}) to scalar-valued variable, to be consistent with the mathematical finance context as well as following Hsu's scheme of substitution \citep{Hsu}, and to be consistent with equation (\ref{FellerPDE}), we let
\begin{itemize}
\item $p (\mathbf{x}, t) = P (X, t) = u(t,x)=u$,  
\item $f_j (\mathbf{x}, t) ={\mu} (X, t) = (bx+c)$,  
\item $g_{j \alpha} = g_{k \alpha} = \sigma (X_t, t), \sigma^2(X_t,t)=ax, a \in \mathbb{R}_{>0}$, and
\item $H=v\in(0,1)$.
\end{itemize}
Then, since the parameter (constant) $H=v$ is \emph{not} restricted to any particular real number in the interval $(0, 1)$, equation (\ref{genFPK}) can be rewritten as
\begin{eqnarray}
u_t = (a t^{2 v - 1} x u)_{x x} - ((b x + c) u)_x \label{mainPDE}, \,\, 0 < x < \infty, 
\end{eqnarray}
where $a, b, c, v$ are constants with $0 < v < 1$. Moreover, note that
\begin{itemize}
\item $u = u (t, x)$ depends on both $x$ \emph{and} $t$,  
\item $x$ depends on $t$,  
\item $a$ depends on $v$ (but we will refer to $a$ as a constant since $v$ depends on neither $x$ \emph{nor} $t$), and  
\item $t$ depends on the parameter (constant) $v$.
\end{itemize}
Now, in order to be even more consistent with equation (\ref{FellerPDE}), we {\emph{relax}} the boundedness condition on $v$ such that $v > 0$.

\begin{remark}
When $v = \nicefrac{1}{2}$ (i.e., the Hurst exponent $H = \nicefrac{1}{2}$), equation (\ref{mainPDE}) reads as
\begin{eqnarray*}
u_t & = & (a t^{2 v - 1} x u)_{x x} - ((b x + c) u)_x = (a x u)_{x x} - ((b x + c) u)_x,
\end{eqnarray*}
where $0 < x < \infty$, and this is equivalent to equation (\ref{FellerPDE}).
\end{remark}

From this point on, this paper will focus on equation (\ref{mainPDE}) [with the condition that $v>0$ and \emph{not} $0<v<1$] and derives the non-analytical solution to equation (\ref{mainPDE}) along with proving the existence of the derived non-analytical solution.

\section{Definitions and Preliminaries prior to Solving the Fokker-Planck Equation of Fractional Brownian Motion}\label{somedef}
%\skip
We devote this section to providing the readers with some definitions and preliminaries before actually solving the Fokker-Planck equation of fBm. We will follow closely with Feller's \textit{Two Singular Diffusion Problem} \citep{Feller}. Thus, the readers are encouraged to refer to Feller's work while reading through this section. For convenience, the equation that we will dedicate this and the following sections to is presented here again. That is, the equation is
\begin{eqnarray}\label{mainFPK}
u_t = (a t^{2 v - 1} x u)_{x x} - ((b x + c) u)_x, \,\, 0 < x < \infty, 
\end{eqnarray}
where $u = u (t, x)$ and $a, b, c, v$ are constants with $a>0$ and $v > 0$. Notice that this equation is a \textit{linear} parabolic equation. This can be seen more clearly as follows:
\begin{eqnarray*}
u_t & = & (a t^{2 v - 1} x u)_{x x} - ((b x + c) u)_x\\ 
& = & a t^{2 v - 1} x u_{x x} + (2 a t^{2 v - 1} - b x - c) u_x - b u\\
%& & \\
\Longrightarrow b u & = & a t^{2 v - 1} x \frac{\partial^2 u}{\partial x^2} + (2 a t^{2 v - 1} - b x - c) \frac{\partial u}{\partial x} - \frac{\partial u}{\partial t}\\ 
& = & \mathcal{F} (t, x) \frac{\partial^2 u}{\partial x^2} +\mathcal{G} (t, x) \frac{\partial u}{\partial x} - \frac{\partial u}{\partial t},
\end{eqnarray*}
where $\displaystyle{\mathcal{F} (t, x) \assign a t^{2 v - 1} x}$, $\displaystyle{\mathcal{G} (t, x) \assign 2 a t^{2 v - 1} - b x - c}$, and $b u$ is a \emph{linear} function in $u$ (where $u$ is a \emph{linear} function in $x$). It is worthy to remark that the Laplace transform will be done with respect to $x$.

First, it is customary to define what it means for an equation to be a
solution of equation (\ref{mainFPK}).

\begin{definition}[Solution to Equation
(\ref{mainFPK})]
$u (t, x)$ is a solution of equation (\ref{mainFPK}) if, for $x > 0$, it has continuous partial derivatives satisfying equation (\ref{mainFPK}). Also, if for every fixed $s > 0$ and $t > 0$, the functions $e^{- s x} u (t, x)$ and $e^{- s x} u_t (t, x)$ are integrable over $0 < x < \infty$, and this is uniform in every interval $0 < t_o \leqslant t \leqslant t_1 < \infty$.
\end{definition}

Furthermore, we let
\begin{equation}
\omega (t, s) = \int_0^{\infty}{ e^{- s x} u (t, x)\,d x} \label{omega(ts)}.
\end{equation}
The Laplace transform of $u_t (t, x)$ exists and can be found by differentiating equation (\ref{omega(ts)}) formally. Also, note that it is \emph{not} necessary that the Laplace transform of $u_x (t, x)$ exists since, in general, $u (t, x) \rightarrow \infty$ as $x \rightarrow 0$.

Next, for $u_t \in L (0, 1)$, it can be seen that
\begin{eqnarray}
& & \lim_{x \rightarrow 0} \int_x^1{ u_t d x}  = \lim_{x \rightarrow 0}
\int_x^1{ (a t^{2 v - 1} x u)_{x x} - ((b x + c) u)_x\, d x} \nonumber \\
& \Longrightarrow & \lim_{x \rightarrow 0} \{ (a t^{2 v - 1} x u)_x - (b x + c) u \} = - f (t) \label{limx0}
\end{eqnarray}
exists and is bounded in every finite $t$-interval since $u(t,x) \rightarrow \infty$ as $x \rightarrow 0$. 
In this paper,
\textit{$f (t)$ will be called the flux of $u$ at the origin} with same reasoning as Feller \citep{Feller}(i.e., from equation (\ref{mainFPK}),
$\displaystyle{\frac{\partial}{\partial t}  \int_{\alpha}^{\beta} u (t, x)\, d x}$ equals the flux at $\alpha$ minus the flux at $\beta$).

To avoid special considerations for the fundamental solutions, we allow \textit{initial values} to also be discontinuous functions \citep{Feller}.

Now, let $P (x)$ be a function of bounded variation on the interval $(0, \infty)$ such that $P (x) \rightarrow 0$ as $x \rightarrow 0$, and assume its Laplace transform
\begin{equation}
\pi (s) = \int_0^{\infty} e^{- s x}\, d P (x) \label{pis}
\end{equation}
exists.

Additionally, we shall have the following definition:

\begin{definition}[($u (t, x)$ Having Initial Values $P (x)$)]
The solution $u (t, x)$ has the initial values $P (x)$ if
\[ \lim_{t \rightarrow 0} \int_0^x u (t, x)\, d x = P (x) \]
at every point of continuity of $P (x)$.
\end{definition}

\begin{remark} 
This definition makes it necessary that $\omega (t, s) \rightarrow \pi (s)$.
\end{remark}

Likewise, we should define what a \textit{fundamental solution} is.

\begin{definition}[Fundamental Solution]
A solution $u (t, x ; \xi)$ depending on the parameter $\xi > 0$ is a fundamental solution if it assumes the initial values
\[ \left\{ \begin{array}{lll}
     0 & \text{when} & x < \xi\\
     1 & \text{when} & x > \xi
   \end{array} \right.. \]
That is, if
\[ \omega (t, x ; \xi) \rightarrow e^{- s \xi} . \]
\end{definition}

In this instance, provided that the regularity conditions imposed on
our solutions are satisfied by $u (t, x ; \xi)$ uniformly with respect to $\xi$,
\[ u (t, x) = \int_0^{\infty} u (t, x ; \xi)\, d P (\xi) \]
assumes the initial values $P (x)$.

Prior to ending this section, readers should note that the author have used the notation,
\[\widetilde{\kern-7pt\int}{f},\]
in this paper to denote the indefinite integral of any function $f$ with the constant of integration being omitted. More specifically, if 
\[\int{f(x)\,dx} = F(x) + C,\]
where $C$ is the constant of integration, then
\[\widetilde{\kern-7pt\int}{f(x)\,dx} = F(x).\]

\section{Laplace Transforms of the Fokker-Planck Equation of Fractional Brownian Motion}\label{derivation}

In this section, we will take the Laplace transform of equation (\ref{mainFPK}).

Since the individual terms on the right hand side is {\emph{not}} necessarily integrable, we have to be cautious when taking the Laplace transforms of equation (\ref{mainFPK}). However, by assumption, the Laplace transform of the left hand side converges for $s > 0$ \citep{Feller}. Then, by treating the right hand side as a {\emph{unit}} and by using equation (\ref{limx0}), this yields
\begin{equation}
\omega_t(t,s) \assign \omega_t = f (t) + s \int_0^{\infty} e^{- s x}  \{ a t^{2 v - 1}  (x u)_x - (b x + c) u \}\, d x \label{Lpmain}.
\end{equation}
Let $\mathfrak{L}$ be the Laplace transform operator. Then, due to the reason that $\mathfrak{L}\{u\}$ and $\mathfrak{L}\{x u\}$ converge for $s > 0$, $\mathfrak{L}\{(x u)_x\}$ also converges for $s > 0$. This implies that $(x u)_x$ is absolutely integrable near $x = 0$ and
\[ \lim_{x \rightarrow 0} x u (t, x) = \kappa \]
for some constant $\kappa$. It is {\emph{necessary}} that $\kappa = 0$, otherwise $u (t, x)$ would not be integrable.

From equation (\ref{Lpmain}), we have that
\begin{eqnarray}
\omega_t & = & f (t) + a t^{2 v - 1} s \int_0^{\infty}{ e^{- s x}  (x u)_x\, d x} \label{mLpmain} \\
& & - b s \int_0^{\infty}{ e^{- s x} x u\, d x} - c s \int_0^{\infty}{ e^{- s x} u\, d x}.\nonumber
\end{eqnarray}
From equation (\ref{mLpmain}), since $\displaystyle{\frac{\partial}{\partial s} \left\{e^{- s x} x u\right\}}$ exists and is continuous, if we let
\begin{eqnarray*}
\omega & \assign & \int_0^{\infty} e^{- s x} u\, d x\\
&  & \\
\Longrightarrow \omega_s & = & - \int_0^{\infty} e^{- s x} x u\, d x \,\,\,\text{(by applying Leibniz integral rule)}\\
&  & \\
\Longrightarrow s \omega_s & = & - s \int_0^{\infty} e^{- s x} x u\, d x = - \int_0^{\infty} e^{- s x}  (x u)_x\, d x
\end{eqnarray*}
then equation (\ref{mLpmain}) can be rewritten as
\begin{eqnarray}
\omega_t + s (a t^{2 v - 1} s - b) \omega_s & = & - c s \omega + f (t) \label{omegat}.
\end{eqnarray}
We require solutions that satisfy the condition
\[ \lim_{t \rightarrow 0} \omega (t, s) = \pi (s) . \]

Sequentially, we have the following lemma:

\begin{lemma} \label{lemma1}
The solutions of the initial value problem
\begin{equation} \label{IVP}
\left\{ \begin{array}{lll}
\omega_t + s (a t^{2 v - 1} s - b) \omega_s  =  - c s \omega + f (t)\\
\lim_{t \rightarrow 0} \omega (t, s) = \pi (s)
\end{array} \right.
\end{equation}
where $a, b, c, v$ are constants with $a>0$ and $v > 0$, are given by
\begin{eqnarray}
& & \omega (t, s) = \label{IVPsol}\\ 
& & \left[\displaystyle{e^{- G \left( t ; \frac{1 - s a b^{- 2 v} \Gamma (2 v \nocomma, b t) e^{b t}}{s e^{bt}} \right) + G^{\star} \left( 0 ; \frac{1 - s a b^{- 2 v} \Gamma (2 v \nocomma, b t) e^{b t}}{s e^{bt}} \right)}}\right.\nonumber\\ 
& & \left.\cdot \pi \left( \frac{s e^{bt}}{1 - s e^{bt} a b^{- 2 v}  (\Gamma (2 v \nocomma, b t) - \Gamma (2 v \nocomma))} \right)\right] \nonumber\\
&  & \displaystyle{+ e^{- G \left( t ; \frac{1 - s a b^{- 2 v} \Gamma (2 v \nocomma, b t) e^{b t}}{s e^{bt}} \right)} \int_0^t \left\{ f (\tau) e^{G \left( \tau ; \frac{1 - s a b^{- 2 v} \Gamma (2 v \nocomma, b t) e^{b t}}{s e^{bt}} \right)} \right\}\, d \tau}\nonumber, 
\end{eqnarray}
where
\[ G (\mu_{\pm} ; \mathcal{C}) \assign \kern6pt\widetilde{\kern-7pt\int} \pm \frac{c}{a b^{- 2 v} \Gamma (2 v, b \mu) e^{b \mu} + e^{b\mu} \mathcal{C}}\, d \mu \]
and
\[ G^{\star} (\nu_{\pm} ; \mathcal{C}) \assign  \left(\kern6pt \widetilde{\kern-7pt\int} \pm \frac{c}{a b^{- 2 v} \Gamma (2 v, b \mu) e^{b \mu} +e^{b\mu} \mathcal{C}}\, d \mu \right)_{\mu = \nu}. \]
(Remark that the functions $G$ and $G^{\star}$ are merely notations for the integral of its respective function specified above.)
\end{lemma}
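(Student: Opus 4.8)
The plan is to treat (\ref{IVP}) as a first-order linear PDE in the two independent variables $t$ and $s$ and solve it by the method of characteristics. First I would write the characteristic system of $\omega_t + s(a t^{2v-1}s - b)\omega_s = -cs\omega + f(t)$ as $\frac{dt}{1} = \frac{ds}{s(a t^{2v-1}s - b)} = \frac{d\omega}{-cs\omega + f(t)}$. The first equality determines the characteristic curves in the $(t,s)$-plane, and along each such curve the remaining equality becomes an ordinary differential equation for $\omega$.

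The key structural observation is that the curve equation $\frac{ds}{dt} + bs = a t^{2v-1}s^2$ is a Bernoulli equation. I would linearize it by the substitution $w \assign s^{-1}$, which yields $\frac{dw}{dt} - bw = -a t^{2v-1}$, a linear ODE solved by the integrating factor $e^{-bt}$. The antiderivative $\widetilde{\kern-7pt\int} t^{2v-1}e^{-bt}\,dt$ is where the incomplete gamma function enters: the substitution $u = bt$ reduces it to $b^{-2v}\widetilde{\kern-7pt\int} u^{2v-1}e^{-u}\,du = -b^{-2v}\Gamma(2v, bt)$. Solving for the constant of integration then gives the characteristic invariant $\mathcal{C} = \frac{1 - sa b^{-2v}\Gamma(2v, bt)e^{bt}}{se^{bt}}$, which is exactly the quantity passed to $G$ and $G^{\star}$ in the statement.

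Next, along a fixed characteristic I would express $s = e^{-bt}/(a b^{-2v}\Gamma(2v, bt) + \mathcal{C})$, so that the transport equation becomes the linear ODE $\frac{d\omega}{dt} + cs\,\omega = f(t)$, whose coefficient $cs$ equals $c/(a b^{-2v}\Gamma(2v, b\mu)e^{b\mu} + e^{b\mu}\mathcal{C})$ with $\mu = t$ and $\mathcal{C}$ held fixed. This is precisely the integrand in the definition of $G(\mu_+;\mathcal{C})$, so the integrating factor is $e^{G(t;\mathcal{C})}$. Multiplying through and integrating from $0$ to $t$ produces $\omega = e^{-G(t;\mathcal{C})}\big[\,\omega|_{t\to 0}\,e^{G^{\star}(0;\mathcal{C})} + \int_0^t f(\tau)e^{G(\tau;\mathcal{C})}\,d\tau\,\big]$, which reproduces the exponential-of-$G$ prefactors and the $f(\tau)$-integral appearing in (\ref{IVPsol}).

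Finally I would impose the initial condition. Letting $t\to 0$ in the characteristic relation and using $\Gamma(2v, 0) = \Gamma(2v)$ shows that the characteristic through $(t,s)$ crosses the line $t=0$ at $s_0 = se^{bt}/\big(1 - se^{bt}a b^{-2v}(\Gamma(2v, bt) - \Gamma(2v))\big)$; hence $\omega|_{t\to 0} = \pi(s_0)$, which completes (\ref{IVPsol}). I expect the main obstacle to be bookkeeping rather than conceptual: correctly identifying the antiderivative of $t^{2v-1}e^{-bt}$ as the upper incomplete gamma function, and then inverting $\mathcal{C} = \mathcal{C}(t,s)$ and clearing the nested fractions to recover the clean argument of $\pi$ without a sign slip in $\Gamma(2v, bt) - \Gamma(2v)$ or in the Bernoulli substitution, since such an error would propagate through the entire formula. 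Because neither $G$ nor the incomplete gamma integral admits an elementary closed form, the answer must be recorded through the indefinite-integral notation $\widetilde{\kern-7pt\int}$, which is exactly what renders the resulting solution \emph{non-analytic}.
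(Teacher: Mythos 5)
Your proposal is correct and follows essentially the same route as the paper's proof: the method of characteristics, the Bernoulli substitution $z = s^{-1}$ producing the linear ODE whose antiderivative is the upper incomplete gamma function, the characteristic invariant $\mathcal{C} = \frac{1 - s a b^{-2v}\Gamma(2v, bt)e^{bt}}{s e^{bt}}$, the integrating factor $e^{G(t;\mathcal{C})}$ for the transport equation, and tracing each characteristic back to $t=0$ to obtain the argument $\frac{se^{bt}}{1 - se^{bt}ab^{-2v}(\Gamma(2v,bt)-\Gamma(2v))}$ of $\pi$. The only organizational difference is that you integrate the transport ODE directly from $0$ to $t$ along a characteristic, whereas the paper writes the general solution with an arbitrary function $C_2 = A(C_1)$ of the invariant and then determines $A$ from the initial condition; the two bookkeeping schemes yield the identical formula.
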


\begin{proof}
First, we note that $a, b, c, v$ are constants with $a>0$ and $v > 0$. We suppose, during the derivation, that $b \neq 0$ to avoid ambiguities. The final results will also hold for $b = 0$ since $v \gneqq 0$.
  
The characteristic equations of equation (\ref{IVP}) are
\[ d t = \frac{d s}{s (a t^{2 v - 1} s - b)} = \frac{d \omega}{f (t) - c s \omega} . \]
%\begin{tmframed}
The first characteristic equation can be solved as follows:

First, rewrite the differential equation as
\[ \frac{d s}{d t} = s (a t^{2 v - 1} s - b) = a t^{2 v - 1} s^2 - b s. \]
Notice that this is a Bernoulli's Equation in $s$. So, we let $\displaystyle{z \assign \nicefrac{1}{s} \Longleftrightarrow s = \nicefrac{1}{z}}$. With this change of variable, we may rewrite the above equation as
\begin{eqnarray}
\frac{d z}{d t} - b z = - a t^{2 v - 1}. \label{reode}
\end{eqnarray}
Using variation of parameters technique, we can solve equation (\ref{reode}) as follows: note that the integrating factor here is $\displaystyle{{\mu} (t) = e^{\int{- b\, d t}} = e^{- b t}}$, and, so,
\begin{eqnarray}
& & \frac{d z}{d t} - b z = - a t^{2 v - 1} \nonumber \\
& \Longrightarrow & e^{- b t}  \left[ \frac{d z}{d t} \right] + z \left[ \frac{d}{d t} (e^{- b t}) \right] = e^{- b t}  (- a t^{2 v - 1}) \nonumber\\
& \Longrightarrow & \int d (e^{- b t} z) = \int e^{- b t}  (- a t^{2 v - 1})\, d t \nonumber\\
& \Longrightarrow & z = a b^{- 2 v} \Gamma (2 v \nocomma, b t) e^{b t} + e^{bt} C_1 \label{fscharacsol}, 
\end{eqnarray}
where $C_1$ is an arbitrary constant, and
\[ \Gamma (q_0, p_0) \assign \int_{p_0}^{\infty} x^{q_0 - 1} e^{- x}\, d x\]
is the \textit{``upper'' incomplete gamma function}. (Remark that when $p_0 = 0$, 
\[\Gamma (q_0, p_0) = \Gamma (q_0, 0) = \Gamma (q_0),\] 
where $\Gamma (q_0)$ is the \textit{usual gamma function}.) Substituting $\displaystyle{z = \nicefrac{1}{s}}$ back to the equation (\ref{fscharacsol}), yields the solution
\begin{eqnarray*}
\frac{1}{s} = a b^{- 2 v} \Gamma (2 v \nocomma, b t) e^{b t} + e^{bt} C_1 & \Longleftrightarrow & s = \frac{1}{a b^{- 2 v} \Gamma (2 v \nocomma, b t) e^{b t} + e^{bt} C_1}
\end{eqnarray*}
as required.
%\end{tmframed}
  
We further note that from the solution $s$ above, 
\[C_1 = \displaystyle\frac{1 - s a b^{- 2 v} \Gamma (2 v \nocomma, b t) e^{b t}}{s e^{bt}}\] 
{\emph{and}}, when $t=0$, $\displaystyle{C_1 = \nicefrac{(1 - s a b^{- 2 v} \Gamma (2 v \nocomma))}{s}}$.
  
%\begin{tmframed}
Now, given that $s = \displaystyle\nicefrac{1}{(a b^{- 2 v} \Gamma (2 v \nocomma, b t) e^{b t} + e^{bt} C_1)}$, we can solve the second characteristic equation as follows:
   
First, we substitute $\displaystyle{s = \nicefrac{1}{(a b^{- 2 v} \Gamma (2 v \nocomma, b t) e^{b t} + e^{bt} C_1)}}$ into the second characteristic equation to get
\begin{equation}
\frac{d \omega}{d t} + \frac{c}{a b^{- 2 v} \Gamma (2 v \nocomma, b t) e^{b t} + e^{bt} C_1} \omega = f (t) \label{scharac}.
\end{equation}
Recall that we will have the solution $\omega_1$ as well as the solution $\omega_2$, and the solution to this ODE will be $\omega = \omega_1 + \omega_2$. Thus, we have that
\[ \begin{split}
\frac{d \omega_1}{d t} + \frac{c}{a b^{- 2 v} \Gamma (2 v \nocomma, b t) e^{b t} + e^{bt} C_1} \omega_1 = 0\\ 
\Longrightarrow \int \frac{d \omega_1}{\omega_1} = \int - \frac{c}{a b^{- 2 v} \Gamma (2 v \nocomma, b t) e^{b t} + e^{bt} C_1}\, d t. 
\end{split}\]
This implies that
\[ \omega_1 = C_2 \exp \left( \kern6pt\widetilde{\kern-7pt\int} - \frac{c}{a b^{- 2 v} \Gamma (2 v \nocomma, b t) e^{b t} + e^{bt} C_1}\, d t \right), \]
where $C_2$ is a constant. For convenience, we define
\[ G (\mu_{\pm} ; \mathcal{C}) = \kern6pt\widetilde{\kern-7pt\int} \pm \frac{c}{a b^{- 2 v} \Gamma (2 v \nocomma, b \mu) e^{b \mu} +e^{b\mu} \mathcal{C}}\, d \mu. \]
Next, we multiply
\[ \exp \left( \kern6pt \widetilde{\kern-7pt\int} \frac{c}{a b^{- 2 v} \Gamma (2 v \nocomma, b t) e^{b t} + e^{bt} C_1}\, d t  \right) = \exp \left( G (t_+ ; C_1) \right) \]
throughout the equation (\ref{scharac}) [where $\omega$ is now interchanged with $\omega_2$ for trivial reasons] to get
\begin{eqnarray*}
& & \exp (G (t_+ ; C_1)) \nocomma \frac{d \omega_2}{d t} + \exp (G (t_+; C_1)) \nocomma \frac{d\,G (t_+ ; C_1)}{d t} \omega_2 \\ 
& & = \exp (G (t_+ ; C_1)) \cdot f (t) \\ 
& & \\
& \Longrightarrow & \omega_2 = \frac{\int_0^t f (\tau) \exp (G (\tau_+ ; C_1))\, d \tau}{\exp (G (t_+ ; C_1)) \nocomma}.
\end{eqnarray*}
The solution of equation (\ref{scharac}) is, therefore,
\begin{eqnarray}
\omega & = & \omega_1 + \omega_2 \label{scharacsol} \\
& = & \exp (G (t_- ; C_1)) \left\{ C_2 + \int_0^t f (\tau) \exp (G (\tau_+ ; C_1))\, d\tau \right\} \nonumber. 
\end{eqnarray}
%\end{tmframed}
  
Sequentially, let $C_2 = A (C_1)$, where $A (y)$ is an {\emph{arbitrary}} function, and substitute the values 
\[\displaystyle{C_1 = \frac{1 - s a b^{- 2 v} \Gamma (2 v \nocomma)}{s}}\] 
(i.e., $\displaystyle{C_1 = \nicefrac{(1 - s a b^{- 2 v} \Gamma (2 v \nocomma))}{s}}$ when $t = 0$) \emph{and} 
\[C_2 = A (C_1)\]
into the solution $\omega$ above (i.e., equation (\ref{scharacsol})) with the imposed initial condition
\[ \lim_{t \rightarrow 0} \omega (t, s) = \pi (s) \]
(i.e., $t = 0$). Then, we have
\[ \omega (0, s) = \exp \left( G^{\star} \left( 0_- ; \frac{1 - s a b^{- 2 v} \Gamma (2 v \nocomma)}{s} \right) \right) \cdot A \left( \frac{1 - s a b^{- 2 v} \Gamma (2 v \nocomma)}{s} \right), \]
where
\[ G^{\star} (\nu_{\pm} ; \mathcal{C}) \assign \left( \kern6pt \widetilde{\kern-7pt\int} \pm \frac{c}{a b^{- 2 v} \Gamma (2 v \nocomma, b \mu) e^{b \mu} +e^{b\mu} \mathcal{C}}\, d \mu \right)_{\mu = \nu} . \]
Equating $\omega (0, s)$ to $\pi (s)$ \emph{and} letting 
\[\displaystyle{y \assign \frac{1 - s a b^{- 2 v} \Gamma (2 v \nocomma)}{s} \Longrightarrow s = \frac{1}{y + a b^{- 2 v} \Gamma (2 v \nocomma)}},\] 
we find
\[ A (y) = \pi \left( \frac{1}{y + a b^{- 2 v} \Gamma (2 v \nocomma)} \right) \cdot \exp \left( - G^{\star} \left( 0_- ; y \right) \right) . \]
That is,
\begin{eqnarray*}
C_2 & = & A (C_1)\\
& = & A \left( \frac{1 - s a b^{- 2 v} \Gamma (2 v \nocomma, b t) e^{b t}}{s e^{bt}} \right)\\
& = & \pi \left( \frac{s e^{bt}}{1 - s e^{b t} a b^{- 2 v}  (\Gamma (2 v \nocomma, b t) - \Gamma (2 v \nocomma))} \right)\\ 
& & \cdot \exp \left( - G^{\star} \left( 0_- ; \frac{1 - s a b^{- 2 v} \Gamma (2 v \nocomma, b t) e^{b t}}{s e^{bt}} \right) \right)
\end{eqnarray*}
when $\displaystyle{C_1 = \nicefrac{(1 - s a b^{- 2 v} \Gamma (2 v \nocomma, b t) e^{b t})}{s e^{bt}}}$. Substituting $C_1$ and $C_2$ into the solution $\omega$ above (i.e., equation (\ref{scharacsol})), we get
\begin{eqnarray*}
\omega & = & \exp \left( G \left( t_- ; \frac{1 - s a b^{- 2 v} \Gamma (2 v \nocomma, b t) e^{b t}}{s e^{bt}} \right) \right)\\
&  & \cdot \pi \left( \frac{s e^{bt}}{1 - s e^{bt} a b^{- 2 v}  (\Gamma (2 v \nocomma, b t) - \Gamma (2 v \nocomma))} \right)\\ 
& & \cdot \exp \left( - G^{\star} \left( 0_- ; \frac{1 - s a b^{- 2 v} \Gamma (2 v \nocomma, b t) e^{b t}}{s e^{bt}} \right) \right)\\
&  & + \left[\exp \left( G \left( t_- ; \frac{1 - s a b^{- 2 v} \Gamma (2 v \nocomma, b t) e^{b t}}{s e^{bt}} \right) \right)\right.\\ 
& & \left.\cdot \int_0^t f (\tau) \exp \left( G \left( \tau_+ ; \frac{1 - s a b^{- 2 v} \Gamma (2 v \nocomma, b t) e^{b t}}{s e^{bt}} \right) \right)\, d \tau\right]. \\
  \end{eqnarray*}
After some algebraic manipulations, we can get equation (\ref{IVPsol}) as desired.
\end{proof}

\begin{remark}
Given
\begin{eqnarray*}
\Gamma (q_0, p_0) & \assign & \int_{p_0}^{\infty} t^{q_0 - 1} e^{- t}\, d t\\
\Longrightarrow \Gamma (2 v \nocomma, b t) - \Gamma (2 v \nocomma)
& = & \int_{b t}^{\infty} x^{2 v - 1} e^{- x}\, d x - \int_0^{\infty} x^{2 v - 1} e^{- x}\, d x,
\end{eqnarray*}
but since $b \in \mathbbm{R}$, we cannot manipulate the two integrals above further since the factor $b t$ can either be positive {\emph{or}} negative.
\end{remark}

\section{General Form of the Solutions to the Laplace Transforms of the Fokker-Planck Equation of Fractional Brownian Motion}\label{proofs}

We dedicate this section to impose conditions on the factors of equation (\ref{IVPsol}) since we now know that the Laplace transform of any solution $u (t, x)$ of equation (\ref{mainFPK}) {\emph{must}} be of the form of equation (\ref{IVPsol}).

\begin{lemma} \label{lemma2}
If equation (\ref{IVPsol}) represent the Laplace transform of a solution $u (t, x)$ then $f (t)$ is given by
\begin{eqnarray*}
e^{G^{\star} \left(0 ; - a b^{- 2 v} \Gamma (2 v \nocomma, bt)\right)} \pi \left( - \frac{1}{a b^{- 2 v}  (\Gamma (2 v \nocomma, b t) - \Gamma (2 v \nocomma))} \right) & & \\ 
+ \int_0^t \{ f (\tau) e^{G (\tau ; - a b^{- 2 v} \Gamma (2 v, b t))} \}\, d \tau & = & 0.
\end{eqnarray*}
\end{lemma}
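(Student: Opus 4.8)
The plan is to obtain the asserted identity by sending $s \to \infty$ in the explicit formula (\ref{IVPsol}) from Lemma \ref{lemma1} and invoking the fact that the Laplace transform of an integrable function must vanish at infinity. By the Definition of a solution, for every fixed $t > 0$ the function $e^{- s x} u (t, x)$ is integrable over $0 < x < \infty$, so $\omega (t, s) = \int_0^{\infty} e^{- s x} u (t, x)\, d x$ is finite for all admissible $s$; I would then argue, via dominated convergence with dominating function $e^{- s_0 x} |u (t, x)|$ for a fixed $s_0 > 0$, that $\lim_{s \to \infty} \omega (t, s) = 0$. This single vanishing condition is precisely what pins down the flux $f (t)$, so the lemma expresses a \emph{necessary} constraint.

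Next I would track the $s \to \infty$ behaviour of each factor on the right-hand side of (\ref{IVPsol}). Writing the characteristic constant as $C_1 = \frac{1}{s e^{b t}} - a b^{- 2 v} \Gamma (2 v \nocomma, b t)$, one sees at once that $C_1 \to - a b^{- 2 v} \Gamma (2 v \nocomma, b t)$. Dividing numerator and denominator by $s$, the argument of $\pi$ satisfies
\[ \frac{s e^{b t}}{1 - s e^{b t} a b^{- 2 v} (\Gamma (2 v \nocomma, b t) - \Gamma (2 v \nocomma))} \longrightarrow - \frac{1}{a b^{- 2 v} (\Gamma (2 v \nocomma, b t) - \Gamma (2 v \nocomma))}. \]
By continuity of $G$ and $G^{\star}$ in their parameter slot, the prefactors $e^{- G (t ; C_1) + G^{\star} (0 ; C_1)}$ and $e^{- G (t ; C_1)}$, together with the integrand factor $e^{G (\tau ; C_1)}$, converge to their values at the limiting parameter $\mathcal{C}_{\infty} \assign - a b^{- 2 v} \Gamma (2 v \nocomma, b t)$. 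Hence the limit of (\ref{IVPsol}) reads
\[ 0 = e^{- G (t ; \mathcal{C}_{\infty})} \left[ e^{G^{\star} (0 ; \mathcal{C}_{\infty})} \pi \left( - \frac{1}{a b^{- 2 v} (\Gamma (2 v \nocomma, b t) - \Gamma (2 v \nocomma))} \right) + \int_0^t f (\tau) e^{G (\tau ; \mathcal{C}_{\infty})}\, d \tau \right]. \]

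Finally, since $e^{- G (t ; \mathcal{C}_{\infty})}$ is a nonvanishing exponential, I would divide it out to recover exactly the stated equation for $f (t)$, with $\mathcal{C}_{\infty} = - a b^{- 2 v} \Gamma (2 v \nocomma, b t)$ appearing as the parameter in both $G^{\star} (0 ; \cdot)$ and $G (\tau ; \cdot)$. The identity is therefore a direct consequence of requiring (\ref{IVPsol}) to be a bona fide Laplace transform.

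The main obstacle is analytic rather than algebraic, lying in rigorously justifying the two limit passages. First, the relation $\lim_{s \to \infty} \omega (t, s) = 0$ must be established even though $u (t, x) \to \infty$ as $x \to 0$; this is where I would lean on the integrability of $e^{- s_0 x} u (t, x)$ near the origin to supply the dominating function needed for dominated convergence. Second, interchanging $\lim_{s \to \infty}$ with the $\tau$-integral in the flux term requires uniform control of $e^{G (\tau ; C_1)}$ and of $f (\tau)$ on $0 \leqslant \tau \leqslant t$, together with the smoothness of the antiderivative $G$ in its parameter $\mathcal{C}$; checking that the denominator $a b^{- 2 v} \Gamma (2 v \nocomma, b \mu) e^{b \mu} + e^{b \mu} \mathcal{C}$ stays away from zero along the relevant range as $C_1 \to \mathcal{C}_{\infty}$, so that $G$ remains well defined in the limit, is the delicate point on which the whole argument hinges.
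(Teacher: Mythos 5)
Your proof is correct and takes essentially the same route as the paper: both arguments let $s \to \infty$ in equation (\ref{IVPsol}), use the fact that $\omega(t,s)$, being the Laplace transform of the integrable function $u(t,\cdot)$, must vanish at infinity, and justify passing the limit through the $\tau$-integral by dominated convergence (the paper's Proposition \ref{prop1}), with the limiting parameter $-a b^{-2v}\Gamma(2v, bt)$. The only cosmetic difference is that the paper multiplies (\ref{IVPsol}) by $e^{G\left(t;\, C_1\right)}$ and notes that this product still tends to zero, whereas you take the limit first and then divide by the nonvanishing limiting factor $e^{-G\left(t;\, -a b^{-2v}\Gamma(2v, bt)\right)}$ --- an equivalent step.
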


\begin{proof}
By definition of a solution, $u (t, x)$ is, for fixed $t$, integrable near $x = 0$. Therefore $\omega (t, s) \rightarrow 0$ as $s \rightarrow \infty$. Note that we have, with even stronger reason, that
\[ \left| e^{G \left( t ; \frac{1 - s \tmop{ab}^{- 2 v} \Gamma (2 v, b t) e^{b t}}{s e^{bt}} \right)} \right| \omega (t, s) \rightarrow 0 \]
as $s \rightarrow \infty$ for all $c \in \mathbbm{R}$ [see Proposition \ref{prop1} in Remark \ref{propremark}].
\end{proof}

\begin{remark}\label{propremark}
We need to check if we can move the limit inside the integral sign of equation (\ref{IVPsol}). Hence, we proved the following proposition by using \textit{dominated convergence theorem}.

\begin{proposition} \label{prop1}
%{\mbox{}}
\begin{eqnarray*}
& & \lim_{s \rightarrow \infty} \int_0^t \left\{ f (\tau) e^{G \left( \tau ; \frac{1 - s a b^{- 2 v} \Gamma (2 v \nocomma, b t) e^{b t}}{s e^{bt}} \right)} \right\}\, d \tau\\ 
& = & \int_0^t \lim_{s \rightarrow \infty} \left\{ f (\tau) e^{G \left( \tau ; \frac{1 - s a b^{- 2 v} \Gamma (2 v \nocomma, b t) e^{b t}}{s e^{bt}} \right)} \right\}\, d \tau\\
& = & \int_0^t \left\{ f (\tau) e^{G \left( \tau ; \lim_{s \rightarrow \infty} \left\{ \frac{1 - s a b^{- 2 v} \Gamma (2 v \nocomma, b t) e^{b t}}{s e^{bt}} \right\} \right)} \right\}\, d \tau\\
& = & \int_0^t \{ f (\tau) e^{G (\tau ; - a b^{- 2 v} \Gamma (2 v \nocomma, b t))} \}\, d \tau
\end{eqnarray*}
for all $t$ and constants $a, b, v$ with $v > 0$.
\end{proposition}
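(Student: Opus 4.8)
The plan is to verify the hypotheses of the dominated convergence theorem for the family of integrands $F_s(\tau) \assign f(\tau)\,e^{G\left(\tau\,;\,\mathcal{C}_s\right)}$, where $\mathcal{C}_s \assign \frac{1 - s a b^{-2v}\Gamma(2v\nocomma, bt)e^{bt}}{s e^{bt}}$ is the $s$-dependent quantity sitting in the second argument of $G$. First I would compute the pointwise limit of $\mathcal{C}_s$: splitting the fraction gives $\mathcal{C}_s = \frac{1}{s e^{bt}} - a b^{-2v}\Gamma(2v\nocomma, bt)$, so that $\lim_{s\to\infty}\mathcal{C}_s = -a b^{-2v}\Gamma(2v\nocomma, bt)$. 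This already identifies the limiting second argument, hence the right-hand side of the claimed chain of equalities, and it is exactly the innermost limit that the proposition pushes through $G$.

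Next I would establish pointwise convergence of the integrands in $\tau$. Since $G(\mu\,;\,\cdot)$ is, by its defining integral, continuous in its second argument $\mathcal{C}$ wherever the denominator $e^{b\mu}\bigl(a b^{-2v}\Gamma(2v, b\mu) + \mathcal{C}\bigr)$ does not vanish, and since $\exp$ is continuous, I would conclude that for each fixed $\tau\in(0,t]$ one has $F_s(\tau)\to f(\tau)\,e^{G\left(\tau\,;\,-a b^{-2v}\Gamma(2v, bt)\right)}$ as $s\to\infty$. This pins down the integrand of the limiting integral and matches the final line of the asserted identity.

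The essential step is to produce a single integrable dominating function. By (\ref{limx0}) the flux $f(t)$ exists and is bounded on every finite $t$-interval, so $|f|$ is bounded, and in particular $f\in L^1(0,t)$; it therefore suffices to bound $\left|e^{G\left(\tau\,;\,\mathcal{C}_s\right)}\right|$ by a constant $M$ independent of $\tau\in[0,t]$ and of all sufficiently large $s$, for then $|F_s(\tau)|\le M\,|f(\tau)|$ is the required dominating function. I would obtain such a bound by analysing the denominator $D(\mu\,;\,\mathcal{C}_s)\assign e^{b\mu}\bigl(a b^{-2v}\Gamma(2v, b\mu) + \mathcal{C}_s\bigr)$ inside $G$: substituting $\mathcal{C}_s$ shows that $a b^{-2v}\Gamma(2v, b\mu) + \mathcal{C}_s = a b^{-2v}\bigl(\Gamma(2v, b\mu) - \Gamma(2v, bt)\bigr) + \frac{1}{s e^{bt}}$, so for $\mu$ bounded away from $t$ this bracket is bounded away from $0$ uniformly in large $s$, keeping $\frac{c}{D(\mu\,;\,\mathcal{C}_s)}$, and hence $G(\tau\,;\,\mathcal{C}_s)$ and its exponential, uniformly controlled (the dependence on the real constant $c$ is only through a harmless multiplicative factor, so the argument holds for every $c\in\mathbb{R}$).

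The delicate region is $\mu\to t$, where the bracket degenerates and $D$ can approach zero; showing that the antiderivative $G$ nevertheless stays finite and uniformly bounded there is the main obstacle, and I would handle it using the monotonicity and the known asymptotics of the incomplete gamma function $\Gamma(2v,\cdot)$ to estimate the rate at which $\Gamma(2v, b\mu)-\Gamma(2v, bt)$ vanishes. Everything else is a routine continuity-plus-domination argument: once the uniform bound $\left|e^{G\left(\tau\,;\,\mathcal{C}_s\right)}\right|\le M$ is secured, the dominated convergence theorem lets me move $\lim_{s\to\infty}$ inside the integral, then inside $G$ by continuity, and finally evaluate $\lim_{s\to\infty}\mathcal{C}_s = -a b^{-2v}\Gamma(2v\nocomma, bt)$, yielding the stated equality for all $t$ and all $a,b,v$ with $v>0$.
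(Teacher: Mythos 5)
Your overall strategy coincides with the paper's: both arguments rest on the dominated convergence theorem, applied after computing $\lim_{s\to\infty}\mathcal{C}_s=-ab^{-2v}\Gamma(2v\nocomma,bt)$ and passing the limit through $G$ by continuity. You are in fact more candid than the paper about where the real difficulty sits: the paper simply asserts dominating functions ($k^{b\tau}$, and ``$h_s(\tau)=O(c^{m(\tau)})$''), which on the compact interval $[0,t]$ amount to exactly the uniform constant bound $M$ you set out to prove. The problem is that you defer that bound as ``the main obstacle,'' and carrying out the incomplete-gamma asymptotics you appeal to shows that it is not merely unproven but false.

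Concretely: since $\frac{d}{d\mu}\Gamma(2v,b\mu)=-b(b\mu)^{2v-1}e^{-b\mu}\neq 0$ at $\mu=t$, the bracket $\Gamma(2v,b\mu)-\Gamma(2v,bt)$ vanishes \emph{linearly}, $\approx (bt)^{2v-1}e^{-bt}\,b\,(t-\mu)$. Hence the integrand defining $G$ at the limiting second argument behaves like $\frac{c}{at^{2v-1}(t-\mu)}$ near $\mu=t$, its antiderivative like $-\frac{c}{at^{2v-1}}\log(t-\mu)$, and therefore
\[
e^{G\left(\tau;\,-ab^{-2v}\Gamma(2v\nocomma,bt)\right)}\;\sim\;\mathrm{const}\cdot(t-\tau)^{-c/(at^{2v-1})},\qquad \tau\to t^-.
\]
For $c>0$ (with the $+$ branch of $G$ that appears in equation (\ref{IVPsol})) this pointwise limit is already unbounded, so no constant $M$ with $\left|e^{G(\tau;\mathcal{C}_s)}\right|\le M$ uniformly in large $s$ can exist, and your proposed dominating function $M|f(\tau)|$ is unavailable. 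Dominated convergence needs only an integrable envelope, and since the denominator inside $G$ decreases in $s$, the functions $e^{G(\tau;\mathcal{C}_s)}$ increase to the displayed limit; the smallest admissible envelope is thus comparable to $|f(\tau)|(t-\tau)^{-c/(at^{2v-1})}$, which is integrable precisely when $c<at^{2v-1}$ (automatic for $c\le 0$) and non-integrable otherwise. So your argument -- and the paper's -- can be repaired only under that additional restriction on $c$; when $c\ge at^{2v-1}$ and $f$ does not vanish at $\tau=t$, the right-hand side of the proposition is a divergent integral, and the unconditional claim ``for all $t$ and constants'' cannot be rescued by domination. (At $v=\nicefrac{1}{2}$ the threshold $c<a$ is exactly the parameter condition in Feller's classification of the boundary at the origin, which is a good sanity check that this restriction is genuine rather than an artifact of the method.)
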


\begin{proof}
First, we try to prove that
\[ \lim_{s \rightarrow \infty} G \left( \tau ; \frac{1 - s a b^{- 2 v} \Gamma (2 v \nocomma, b t) e^{b t}}{s e^{bt}} \right) = G \left( \tau ; \lim_{s \rightarrow \infty}  \frac{1 - s a b^{- 2 v} \Gamma (2 v \nocomma, b t) e^{b t}}{s e^{bt}} \right) . \]
In other words, we try to prove that
\begin{eqnarray*}
& & \lim_{s \rightarrow \infty} \kern6pt\widetilde{\kern-7pt\int} \left\{ \frac{s e^{bt} c}{1 + s a b^{- 2 v}  (\Gamma (2 v, b \tau) e^{b \tau} - \Gamma (2 v \nocomma, b t) e^{b t})} \right\}\, d \tau\\
&=& \kern6pt\widetilde{\kern-7pt\int} \lim_{s \rightarrow \infty} \left\{ \frac{s e^{bt} c}{1 + s a b^{- 2 v}  (\Gamma (2 v, b \tau) e^{b \tau} - \Gamma (2 v \nocomma, b t) e^{b t})} \right\}\, d \tau .
\end{eqnarray*}
However, this case is quite obvious to apply \textit{dominated convergence theorem} on any arbitrary interval of integration $I$ since 
\[\displaystyle{\left| \frac{s e^{bt} c}{1 + s a b^{- 2 v}  (\Gamma (2 v, b \tau) e^{b \tau} - \Gamma (2 v \nocomma, b t) e^{b t})} \right| \leqslant k^{b \tau},\,\,\, k>e\approx2.71\ldots}\]
for almost every $\tau \in I$.
  
Next, we try to prove that
\begin{eqnarray*} 
& & \lim_{s \rightarrow \infty} \int_0^t \left\{ f (\tau) e^{G \left( \tau ; \frac{1 - s a b^{- 2 v} \Gamma (2 v \nocomma, b t) e^{b t}}{s e^{bt}} \right)} \right\}\, d \tau\\ 
& = & \int_0^t \lim_{s \rightarrow \infty} \left\{ f (\tau) e^{G \left( \tau ; \frac{1 - s a b^{- 2 v} \Gamma (2 v \nocomma, b t) e^{b t}}{s e^{bt}} \right)} \right\}\, d \tau .
\end{eqnarray*}
Suppose $f \in L^1 (0, t)$, that is,
\[ \int_0^t | f |\, d \tau < \infty . \]
Now, we let $h_s (\tau) \assign f (\tau) e^{G \left( \tau ; \frac{1 - s a b^{- 2 v} \Gamma (2 v \nocomma, b t) e^{b t}}{s e^{bt}} \right)}$ and let
\[ h (\tau) \assign \lim_{s \rightarrow \infty} h_s (\tau) = f (\tau) e^{\lim_{s \rightarrow \infty} G \left( \tau ; \frac{1 - s a b^{- 2 v} \Gamma (2 v \nocomma, b t) e^{b t}}{s e^{bt}} \right)} \]
for almost every $\tau \in [0, t]$.
  
It is easy to observe that there exists a function $m : \tau \mapsto [0, t]$ (i.e., $m (\tau) \in [0, t] \subseteq \mathbbm{R}$), and $c \in \mathbbm{R}$ with $c > e \approx 2.71 \ldots$ such that $\displaystyle{h_s (\tau) = O (c^{m (\tau)})}$ (i.e., ``Big-O of $\displaystyle{c^{m (\tau)}}$''). That is, $| h_s (\tau) |$ is \emph{dominated} by some function which doesn't depend on $s$ for almost every $\tau \in [0, t]$. Therefore, it follows that
\begin{eqnarray*} 
& & \lim_{s \rightarrow \infty} \int_0^t \left\{ f (\tau) e^{G \left( \tau ; \frac{1 - s a b^{- 2 v} \Gamma (2 v \nocomma, b t) e^{b t}}{s e^{bt}} \right)} \right\}\, d \tau\\ 
& = & \int_0^t \lim_{s \rightarrow \infty} \left\{ f (\tau) e^{G \left( \tau ; \frac{1 - s a b^{- 2 v} \Gamma (2 v \nocomma, b t) e^{b t}}{s e^{bt}} \right)} \right\}\, d \tau,
\end{eqnarray*}
and we're done.
\end{proof}
\end{remark}

\begin{lemma} \label{lemma3}
If $f (t)$ is continuous (this condition can be replaced by boundedness since it is a necessary condition) for $t \geqslant 0$, then equation (\ref{IVPsol}) is the Laplace transform of a solution $u (t, x)$ with initial values $P (x)$. The solution preserves positivity at least whenever $f (t) \geqslant 0$.
\end{lemma}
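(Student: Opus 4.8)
The plan is to run the argument of Lemma~\ref{lemma1} in reverse. Since equation~(\ref{IVPsol}) was constructed precisely so as to solve the transformed initial value problem~(\ref{IVP}), which is itself the Laplace transform of the Fokker--Planck equation~(\ref{mainFPK}) together with its initial and flux data, it suffices to show that, under continuity (or merely boundedness) of $f$, the function $\omega(t,s)$ is genuinely the Laplace transform of a function $u(t,x)$ meeting the regularity demanded by the definition of a solution, and then to read off the initial values and positivity. First I would fix $t>0$ and establish that $s \mapsto \omega(t,s)$ is analytic for $\Re s>0$: the incomplete gamma factor $\Gamma(2v\nocomma,bt)$ and the exponentials are entire in their $s$-dependence, the denominators inside $G$ and $G^{\star}$ do not vanish for $s>0$, so the indefinite integrals defining them are well behaved, and $\pi(s)$ is analytic wherever its defining integral~(\ref{pis}) converges. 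Continuity of $f$ renders the flux term $e^{-G(\cdots)}\int_0^t f(\tau)e^{G(\tau;\cdots)}\,d\tau$ continuously differentiable in $t$, which is what lets us conclude that $\omega_t$ exists and that $\omega$ satisfies~(\ref{omegat}) classically.

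Next I would invert the transform. By Lemma~\ref{lemma2} the condition forced on $f$ drives $\omega(t,s)\to 0$ as $s\to\infty$, uniformly on every interval $0<t_o\leqslant t\leqslant t_1<\infty$; combined with analyticity in $s$ this is exactly what is needed for the Bromwich inversion integral to converge and to define $u(t,x)=\mathfrak{L}^{-1}\{\omega\}$ with $e^{-sx}u(t,x)$ and $e^{-sx}u_t(t,x)$ integrable over $0<x<\infty$, uniformly in $t$ on compact subintervals. Differentiating under the inversion integral, justified by this decay and by Proposition~\ref{prop1}, which already licenses interchanging $\lim_{s\to\infty}$ with the $\tau$-integration, then shows that $u$ has the required continuous partial derivatives and satisfies~(\ref{mainFPK}). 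The initial values are immediate: letting $t\to 0$ in~(\ref{IVPsol}) collapses the flux integral, whose limits coincide, and sends the first term to $\pi(s)$, since the arbitrary function $A$ was chosen in Lemma~\ref{lemma1} precisely so that $\omega(0,s)=\pi(s)$; by the remark following the definition of initial values this gives $\lim_{t\to0}\int_0^x u(t,x)\,dx=P(x)$.

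For the positivity statement I would argue through complete monotonicity and Bernstein's theorem rather than through a maximum principle. Writing $u(t,x)=\int_0^\infty u(t,x;\xi)\,dP(\xi)$ in terms of the fundamental solution, it is enough to treat the case $\pi(s)=e^{-s\xi}$, which is completely monotone in $s$. The homogeneous part of~(\ref{IVPsol}) is obtained from $\pi$ by composing with the inner argument $s e^{bt}/(1-se^{bt}ab^{-2v}(\Gamma(2v\nocomma,bt)-\Gamma(2v\nocomma)))$ and multiplying by the positive factor $e^{-G+G^{\star}}$, operations that preserve complete monotonicity in $s$; when $f(\tau)\geqslant 0$ the flux term is a superposition with nonnegative weights of the same completely monotone building blocks. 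Hence $\omega(t,\cdot)$ is completely monotone, and Bernstein's theorem yields $u(t,x)\geqslant 0$.

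The main obstacle I expect is exactly this last step, verifying complete monotonicity of $\omega(t,s)$ in $s$, because the inner argument of $\pi$ and the factor $e^{-G+G^{\star}}$ are built from the non-elementary $G$, an indefinite integral of a rational expression in the incomplete gamma function, so neither the signs of the successive $s$-derivatives nor the Bernstein-function property of the inner argument is transparent and would have to be extracted from the explicit form. A secondary difficulty is the rigorous inversion under the weaker hypothesis that $f$ is only bounded rather than continuous: there one must replace classical differentiation in $t$ by the integrated form of~(\ref{mainFPK}) used in~(\ref{limx0}) and show that $u$ solves the equation in the almost-everywhere sense, which is what the parenthetical remark in the statement is anticipating.
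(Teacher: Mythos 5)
Your route for the positivity claim --- complete monotonicity of $\pi$, composition with the inner argument, multiplication of the factors, and Bernstein--Widder inversion --- is in substance the paper's own proof, but your proposal stalls precisely at the step the paper actually carries out, and it asserts a false principle at the step that genuinely needs care. First, you misidentify where the difficulty lies: the argument of $\pi$ in equation (\ref{IVPsol}) does not involve $G$ at all. As a function of $s$ it is the M\"obius function $\frac{s e^{bt}}{1-\psi s}$, where $\psi := e^{bt} a b^{-2v}\left(\Gamma(2v,bt)-\Gamma(2v)\right)$ depends only on $t$, and the requirement that this argument lie in $D_\pi=(0,\infty)$ forces $\psi<0$. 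Remark \ref{proofsupp} of the paper then settles the needed monotonicity by direct differentiation, $\frac{\partial^n}{\partial s^n}\left\{\frac{se^{bt}}{1-\psi s}\right\} = \frac{n!\,\psi^{n-1}e^{bt}}{(1-\psi s)^{n+1}}$, so the ``main obstacle'' you anticipate --- extracting a Bernstein-function property from the non-elementary $G$ --- simply is not there; only the prefactor $e^{-G+G^{\star}}$ involves $G$. (Incidentally, with $\psi<0$ these derivatives alternate in sign from $n=2$ onward, so the inner argument is a Bernstein function rather than absolutely monotonic as the paper claims; the desired conclusion survives, since a completely monotone function composed with a Bernstein function is completely monotone.)

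Second, your assertion that complete monotonicity is preserved under ``multiplying by the positive factor $e^{-G+G^{\star}}$'' is a genuine gap: positivity of a factor is not sufficient, because products preserve complete monotonicity only when \emph{both} factors are completely monotone. What the lemma actually requires --- and what the paper asserts, itself with thin justification --- is that $e^{-G+G^{\star}}$ and the factor under the flux integral are themselves Laplace transforms of positive functions possessing all the regularity imposed on solutions; the convolution theorem then delivers positivity, regularity, and the initial values in one stroke, for a general $P$ of bounded variation (written as a difference of non-decreasing parts). This also shows why the Bromwich-inversion and differentiation-under-the-integral scaffolding of your first two paragraphs, and your reduction to the fundamental case $\pi(s)=e^{-s\xi}$, are machinery the paper never needs: once the factors are identified as Laplace transforms of admissible positive functions, everything else follows from the convolution representation.
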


\begin{proof}
Let $\mathfrak{L}^{-1}$ be the inverse Laplace transform operator. We have that, for fixed $t$ and $\tau$,
\begin{eqnarray*} 
\mathfrak{L}^{- 1} & \left\{e^{- G \left( t ; \frac{1 - s a b^{- 2 v} \Gamma (2 v \nocomma, b t) e^{b t}}{s e^{bt}} \right) + G^{\star} \left( 0 ; \frac{1 - s a b^{- 2 v} \Gamma (2 v \nocomma, bt)e^{bt}}{s e^{bt}} \right)} \right. & \\
& \left. \cdot \pi \left(\frac{s e^{bt}}{1 - s e^{bt} a b^{- 2 v}  (\Gamma (2 v \nocomma, b t) - \Gamma (2 v \nocomma))} \right) \right\} > 0 &
\end{eqnarray*}
\emph{and}
\[ \mathfrak{L}^{- 1} \left\{ e^{- G \left( t ; \frac{1 - s a b^{- 2 v} \Gamma (2 v \nocomma, b t) e^{b t}}{s e^{bt}} \right)} \int_0^t \left\{ f (\tau) e^{G \left( \tau ; \frac{1 - s a b^{- 2 v} \Gamma (2 v \nocomma, b t) e^{b t}}{s e^{bt}} \right)} \right\}\, d \tau \right\} > 0 \]
due to the reason that the domain of the function $\pi$ is $D_{\pi} = (0, \infty)$. In other words, the first factor in equation (\ref{IVPsol}) as well as the factor under the integral represent Laplace transforms of positive functions which have all regularity properties imposed on our solutions. If $P (x)$ is non-decreasing then $\pi (s)$ is completely monotonic (i.e., $\displaystyle{\pi^n (s)  (- 1)^n \geqslant 0}$). The argument of $\pi \left( \hspace{1em} \right)$ in equation (\ref{IVPsol}) is absolutely monotonic (i.e., 
\[\displaystyle{\frac{\partial^n}{\partial s^n} \left\{ \frac{s e^{bt}}{1 - s e^{bt} a b^{- 2 v}  (\Gamma (2 v \nocomma, b t) - \Gamma (2 v \nocomma))} \right\} \geqslant 0}\] 
[see Remark \ref{proofsupp}]). Hence, the factor $\pi$ is completely monotonic \emph{or} the difference of two such functions, according as $P (x)$, is non-decreasing or only of bounded variation. The product of two Laplace transforms are the convolution of the original functions (by the convolution theorem). Moreover, this operation preserves the regularity properties imposed on our solutions.
\end{proof}

\begin{remark}\label{proofsupp}
For this lemma, since the domain of the function $\pi$ is $D_{\pi} = (0, \infty)$, it follows that
\begin{eqnarray*}
& & 0 < \frac{s e^{bt}}{1 - s e^{bt} a b^{- 2 v}  (\Gamma (2 v \nocomma, b t) - \Gamma (2 v \nocomma))} < \infty \\ 
& \Longrightarrow & e^{bt} a b^{- 2 v}  (\Gamma (2 v \nocomma, b t) - \Gamma (2 v \nocomma)) < \frac{1}{s} \rightarrow 0 \text{ as } s \rightarrow \infty\\
& \Longrightarrow & e^{bt} a b^{- 2 v}  (\Gamma (2 v \nocomma, b t) - \Gamma (2 v \nocomma)) < 0.
\end{eqnarray*}
Now, let $\displaystyle{\psi \assign e^{bt} a b^{- 2 v}  (\Gamma (2 v \nocomma, b t) - \Gamma (2 v \nocomma)) < 0}$ then \[\displaystyle{\frac{s e^{bt}}{1 - s e^{bt} a b^{- 2 v}  (\Gamma (2 v \nocomma, b t) - \Gamma (2 v \nocomma))} = \frac{s e^{bt}}{1 - s \psi}}.\] Observe that for $n \in \mathbbm{Z}_{\geqslant 1}$,
\begin{eqnarray*}
\frac{\partial^n}{\partial s^n} \left\{ \frac{s e^{bt}}{1 - s \psi} \right\} = \frac{(- 1)^{n + 1} n! \psi^{n - 1} e^{bt}}{[(- 1)^n  (\psi s - 1)]^{n + 1}} = \frac{n! \psi^{n - 1} e^{bt}}{(1 - \psi s)^{n + 1}}>0
\end{eqnarray*}
for both odd \emph{and} even $n$ since $\psi < 0$. Additionally, since
\[ \frac{s e^{bt}}{1 - s \psi} > 0 \implies \frac{\partial^n}{\partial s^n} \left\{ \frac{s e^{bt}}{1 - s \psi} \right\} > 0
\]
for all $n = 0, 1, 2, \ldots$ where the derivative of order $0$ of the function is the original function itself.
\end{remark}

\begin{lemma}
If $P (x)$ is non-decreasing, then equation (\ref{IVPsol}) with $f (t) \equiv 0$ defines a non-negative solution $u (t, x)$ of
\[ u_t = (a t^{2 v - 1} x u)_{x x} - ((b x + c) u)_x, \,\, 0 < x < \infty \]
with initial values $P (x)$ and
\begin{equation}
\int_0^{\infty} u (t, x)\,dx \equiv P (\infty) \label{equivP}.
\end{equation}
(Norm preserving or `reflecting barrier' solution.) For this solution, we have
\begin{eqnarray}
\lim_{x \rightarrow 0} u (t, x) & = & e^{- G (t ; - a b^{- 2 v} \Gamma (2 v \nocomma, b t)) + G^{\star} \left(0 ; - a b^{- 2 v} \Gamma (2 v \nocomma, bt)\right)}\label{slimx0} \\
& & \cdot \pi \left( - \frac{1}{a b^{- 2 v} \left( \Gamma (2 v \nocomma, b t) - \Gamma(2v)\right)} \right).\nonumber
\end{eqnarray}
\end{lemma}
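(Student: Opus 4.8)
The plan is to treat the three assertions separately, since the case $f(t)\equiv0$ is the cleanest instance of the hypotheses already analyzed. The zero function is continuous (indeed bounded) on $t\geqslant0$ and satisfies $f(t)\geqslant0$, so Lemma \ref{lemma3} applies verbatim and already delivers most of the statement: equation (\ref{IVPsol}), which with $f\equiv0$ collapses to its first bracket, is the Laplace transform of a genuine solution $u(t,x)$ of (\ref{mainFPK}) carrying the initial values $P(x)$; and the positivity-preservation clause of Lemma \ref{lemma3} (active precisely because $f\geqslant0$), together with the complete monotonicity of $\pi$ forced by $P$ being non-decreasing, yields $u(t,x)\geqslant0$. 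Thus the only genuinely new content to prove is the mass identity (\ref{equivP}) and the boundary formula (\ref{slimx0}).

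For (\ref{equivP}) I would recover the total mass from the transform $\omega(t,s)=\int_0^\infty e^{-sx}u(t,x)\,dx$ by letting $s\to0^+$: since $u(t,\cdot)\geqslant0$ and $e^{-sx}\to1$ monotonically, monotone convergence gives $\int_0^\infty u(t,x)\,dx=\lim_{s\to0^+}\omega(t,s)$. I would then read this limit off (\ref{IVPsol}) with $f\equiv0$. As $s\to0^+$ the argument of $\pi$, namely $\frac{s e^{bt}}{1-se^{bt}ab^{-2v}(\Gamma(2v,bt)-\Gamma(2v))}$, tends to $0$, so that factor tends to $\pi(0^+)=\int_0^\infty dP(x)=P(\infty)$, using $P(x)\to0$ as $x\to0$. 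Simultaneously $C_1=\frac{1-sab^{-2v}\Gamma(2v,bt)e^{bt}}{se^{bt}}\to\infty$, so the integrand defining $G$ and $G^\star$ vanishes and the prefactor $e^{-G(t;C_1)+G^\star(0;C_1)}\to e^0=1$. Hence $\lim_{s\to0^+}\omega(t,s)=P(\infty)$ for every $t$, which is exactly (\ref{equivP}); the limit-passage through the antiderivative is of the same dominated-convergence type already established in Proposition \ref{prop1}.

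The boundary value (\ref{slimx0}) is the delicate step and the one I expect to be the main obstacle. The guiding observation is that its right-hand side is precisely the $s\to\infty$ specialization of the prefactor-times-$\pi$ structure of $\omega(t,s)$: as $s\to\infty$ one has $C_1\to -ab^{-2v}\Gamma(2v,bt)$, while the argument of $\pi$ tends to the finite positive quantity $-\frac{1}{ab^{-2v}(\Gamma(2v,bt)-\Gamma(2v))}$ guaranteed by Remark \ref{proofsupp}. I would therefore extract $\lim_{x\to0}u(t,x)$ from the large-$s$ behavior of the transform, using the vanishing-flux relation (\ref{limx0}) with $f\equiv0$—the condition characterizing the reflecting-barrier solution—to justify that $u$ stays bounded at the origin and that its boundary value is captured by the limiting transform rather than by a blow-up, the limit-passage through $G$ and $G^\star$ again being licensed by Proposition \ref{prop1} but now applied as $s\to\infty$. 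The genuinely hard part is that the prefactor $e^{-G(t;-ab^{-2v}\Gamma(2v,bt))}$ is evaluated at the value of $C_1$ for which the denominator $e^{b\mu}ab^{-2v}(\Gamma(2v,b\mu)-\Gamma(2v,bt))$ in the integrand of $G$ degenerates at $\mu=t$; to control this I would keep $s$ finite, write that denominator as $e^{b\mu}ab^{-2v}(\Gamma(2v,b\mu)-\Gamma(2v,bt))+e^{bt}/s$ near $\mu=t$, perform the $x\to0$ boundary extraction first, and only afterwards send $s\to\infty$, identifying the stabilized prefactor and $\pi$-argument with those in (\ref{slimx0}). The reflecting (zero-flux) condition (\ref{limx0}) is exactly what should guarantee that these two limiting operations commute and that no boundary mass escapes, keeping the computation consistent with the mass identity (\ref{equivP}) established above.
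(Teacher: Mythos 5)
Your proposal follows essentially the same route as the paper's own proof: existence, non-negativity, and the initial values are delegated to Lemma \ref{lemma3} with $f \equiv 0$; the mass identity (\ref{equivP}) is read off equation (\ref{IVPsol}) in the limit $s \to 0^{+}$ (which is exactly what the paper means by ``$\omega(t,0) = \pi(0)$ follows trivially''); and the boundary formula (\ref{slimx0}) is extracted from the $s \to \infty$ behavior of the prefactor and of the argument of $\pi$, precisely the paper's (Feller-style) argument. The only differences are ones of detail, in your favor: your monotone-convergence justification that $C_1 \to \infty$ forces the prefactor to $1$ in the proof of (\ref{equivP}), and your explicit flagging of the degeneration of the integrand of $G\left(t ; -a b^{-2v}\Gamma(2v, bt)\right)$ at $\mu = t$, are genuine subtleties that the paper passes over in silence.
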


\begin{proof}
The existence of a non-negative solution is guaranteed by Lemma \ref{lemma3}. Equation (\ref{equivP}) is equivalent to saying that $\omega (t, 0) = \pi (0)$ and this follows trivially from equation (\ref{IVPsol}) (also for $b = 0$). In order to prove equation (\ref{slimx0}), note that as $s \rightarrow \infty$, the argument of $\pi$ tends to the finite value
\[\frac{- 1}{a b^{- 2 v} (\Gamma (2 v \nocomma, b t)-\Gamma(2v))}\]
(since $\Gamma (2 v \nocomma, b t)$, $\Gamma(2v)$ converges). Given the function $P^{\star} (x)$ where its Laplace transform is given by expression $\pi$ in equation (\ref{IVPsol}) this means that a finite mass is concentrated at $x = 0$. Near $x = 0$, therefore, the solution $u (t, x)$ behaves like the function belonging to the factor of $\pi$, and the lemma follows.
\end{proof}

\begin{remark}
Intuitively,
\begin{eqnarray*}
\lim_{x \rightarrow 0} u (t, x) & = & \lim_{s \rightarrow \infty} \left\{ e^{- G \left( t ; \frac{1 - s a b^{- 2 v} \Gamma (2 v \nocomma, b t) e^{b t}}{s e^{bt}} \right) + G^{\star} \left( 0 ; \frac{1 - s a b^{- 2 v} \Gamma (2 v \nocomma, bt) e^{bt}}{s e^{bt}} \right)} \right.\\
& & \left. \cdot \pi \left( \frac{s e^{bt}}{1 - s e^{bt} a b^{- 2 v}  (\Gamma (2 v \nocomma, b t) - \Gamma (2 v \nocomma))} \right) \right\} \\
& & \\
& = & e^{- G (t ; - a b^{- 2 v} \Gamma (2 v \nocomma, b t)) + G^{\star} \left(0 ; - a b^{- 2 v} \Gamma (2 v \nocomma, bt)\right)}\\ 
& & \cdot \pi \left( - \frac{1}{a b^{- 2 v} \left( \Gamma (2 v \nocomma, b t) + \Gamma(2v)\right)} \right).
\end{eqnarray*}
\end{remark}

\section{Solution to the Fokker-Planck Equation of Fractional Brownian Motion}\label{solution}

In this \emph{short} section, we present to the readers the \textit{fundamental solution} to equation (\ref{IVPsol}) along with a way to obtain the solution $u (t, x)$.

The \textit{fundamental solution} of equation (\ref{IVPsol}) occurs when we consider $\pi (s) = \displaystyle{e^{- s \xi}}$, where $\xi > 0$ is a parameter. That is, the fundamental solution is
\begin{eqnarray}
& & \omega (t, s ; \xi) \label{fundsol} \\
& = & \displaystyle{e^{- G \left( t ; \frac{1 - s a b^{- 2 v} \Gamma (2 v \nocomma, b t) e^{b t}}{s e^{bt}} \right) + G^{\star} \left( 0 ; \frac{1 - s a b^{- 2 v} \Gamma (2 v \nocomma, bt)e^{bt}}{s e^{bt}} \right)}} \nonumber\\
& & \cdot \displaystyle{e^{- \left( \frac{s e^{bt} \xi}{1 - s e^{bt} a b^{- 2 v}  (\Gamma (2 v \nocomma, b t) - \Gamma (2 v \nocomma))} \right)}} \nonumber\\
&  & + \displaystyle{e^{- G \left( t ; \frac{1 - s a b^{- 2 v} \Gamma (2 v \nocomma, b t) e^{b t}}{s e^{bt}} \right)} \int_0^t \left\{ f (\tau) \displaystyle{e^{G \left( \tau ; \frac{1 - s a b^{- 2 v} \Gamma (2 v \nocomma, b t) e^{b t}}{s e^{bt}} \right)}} \right\}\, d \tau} \nonumber, 
\end{eqnarray}
where $a, b, c, v, \xi$ are constants with $a>0$, $v > 0$, and $\xi > 0$.

Moreover,
\begin{eqnarray}
& & u (t, x ; \xi) \label{mainsol}\\ 
& = & \mathfrak{L}^{- 1} \left\{ e^{- G \left( t ; \frac{1 - s a b^{- 2 v} \Gamma (2 v \nocomma, b t) e^{b t}}{s e^{bt}} \right) + G^{\star} \left( 0 ; \frac{1 - s a b^{- 2 v} \Gamma (2 v \nocomma, bt)e^{bt}}{s e^{bt}} \right)} \right. \nonumber\\
& & \cdot e^{- \left( \frac{s e^{bt} \xi}{1 - s e^{bt} a b^{- 2 v}  (\Gamma (2 v \nocomma, b t) - \Gamma (2 v \nocomma))} \right)} \nonumber\\
&  & + \left[e^{- G \left( t ; \frac{1 - s a b^{- 2 v} \Gamma (2 v \nocomma, b t) e^{b t}}{s e^{bt}} \right)}\right. \nonumber\\ 
& & \left. \left. \cdot \int_0^t \left\{ f (\tau) e^{G \left( \tau ; \frac{1 - s a b^{- 2 v} \Gamma (2 v \nocomma, b t) e^{b t}}{s e^{bt}} \right)} \right\}\, d \tau \right] \right\} \nonumber, 
\end{eqnarray}
where $a, b, c, v, \xi$ are constants with $a>0$, $v > 0$, and $\xi > 0$.

However, due to the non-analyticity of equation (\ref{fundsol}), the inverse Laplace transform cannot be deduced using analytic methods. Instead, numerical implementations should be more suitable to extract numerical solutions in this case.

\section{Application to Cox-Ingersoll-Ross Model}\label{application}

In this section, we consider an application of the derived non-analytic solution of the Fokker-Planck equation of fBm to the CIR model. Specifically, we will use the derived equation (\ref{mainsol}) to obtain the transition probability density function of the stock price $S$ at time $T$ given its price at time $t$, where $t<T$. This section follows closely to \citep{Hsu}.

First, before we proceed to applying the derived non-analytic solution of the Fokker-Planck equation of fBm to the CIR model, it is customary for the author to present to the readers why this application is interesting. The reason this application is interesting is due to the fact that one of the earliest well-known applications of \textit{Two Singular Diffusion Problems} \citep{Feller} is its application to the \emph{standard} CIR model (i.e., CIR model governed by Brownian motion). Also, since CIR model describes the evolution of stock price (or any interest rate derivatives in general), the CIR model have been proven very useful in the area of financial mathematics. Thus, it is only appropriate to make this application our first priority.

Next, we proceed to the application of our result. Note that since fBm is the generalized version of the usual Brownian motion, we consider the CIR model governed by fBm which assumes that the stock price $S$ follows the process
\begin{eqnarray}
dS_t = \mu (S,t)\,dt + \sigma(S,t)\,dB^{H} \label{gCEV}, 
\end{eqnarray}
where $\displaystyle{dB^H}$ is the fBm with Hurst parameter $\displaystyle{H \in (0,1)}$, $b(S,t) = aS+h$ are the dividends in continuous stream that each unit of stock pays out,
\[\mu = rS - b(S,t) = rS - (aS+h),\]
$r$ is the risk-free interest rate, $\sigma(S,t) = \sigma\sqrt{S}$, and $\sigma$ is a positive constant. Then, we have 
\[dS = [(r-a)S-h]\,dt + \sigma\sqrt{S}\,dB^H\]
and, so, the transition probability density function $P=P(S_T | S_t, T>t)$ will satisfy the Fokker-Planck equation
\begin{eqnarray}
H t^{2H - 1}  \frac{\partial^2}{\partial {S_t}^2}(\sigma^2 S_T P) - \frac{\partial}{\partial S_T}[((r-a)S - h)P] = \frac{\partial P}{\partial t}.
\end{eqnarray}

Thus, we let 
\[ \left\{ \begin{array}{lll}
     a & \assign & H\sigma^2 > 0\\
     v & \assign & H > 0\\
     x & \assign & S_T\\
     \xi & \assign & S_t\\
     b & \assign & r-H\sigma^2\\
     c & \assign & -h\\
     t & \assign & \Delta T=(T-t)
   \end{array} \right. \]
to obtain
\begin{eqnarray}
& & P(S_T | S_t, T>t) \nonumber\\
& = & \mathfrak{L}^{- 1} \left\{ e^{- V \left( \Delta T ; \frac{1 - s H\sigma^2 (r-H\sigma^2)^{- 2 H} \Gamma (2 H \nocomma, (r-H\sigma^2) \Delta T) e^{(r-H\sigma^2) \Delta T}}{s e^{(r-H\sigma^2)\Delta T}} \right)} \right. \nonumber \\ 
& & \cdot \displaystyle e^{V^{\star} \left( 0 ; \frac{1 - s H\sigma^2 (r-H\sigma^2)^{- 2 H} \Gamma (2 H \nocomma, (r-H\sigma^2)\Delta T)e^{(r-H\sigma^2)\Delta T}}{s e^{(r-H\sigma^2)\Delta T}} \right)} \nonumber \\
& & \cdot e^{-\left( \frac{s e^{(r-H\sigma^2)\Delta T} S_t}{1 - se^{(r-H\sigma^2)\Delta T} H\sigma^2 (r-H\sigma^2)^{-2H}  (\Gamma (2H \nocomma, (r-H\sigma^2) \Delta T)-\Gamma (2H \nocomma))} \right)} \nonumber\\
& & + \left[\displaystyle{\int_0^{\Delta T}{ \left\{ f (\tau) e^{V \left( \tau ; \frac{1 - s H\sigma^2 (r-H\sigma^2)^{- 2 H} \Gamma (2 H \nocomma, (r-H\sigma^2) \Delta T) e^{(r-H\sigma^2) \Delta T}}{s e^{(r-H\sigma^2)\Delta T}} \right)} \right\}\, d \tau}}\right.\nonumber\\
& & \left. \left. \cdot e^{-V \left( \Delta T ; \frac{1 - s H\sigma^2 (r-H\sigma^2)^{- 2 H} \Gamma (2 H \nocomma, (r-H\sigma^2) \Delta T) e^{(r-H\sigma^2) \Delta T}}{s e^{(r-H\sigma^2)\Delta T}} \right)}\right]\right\} \left(S_T\right) \nonumber,
\end{eqnarray}
where $s$ is the Laplace variable,
\[ V (\mu_{\pm} ; \mathcal{C}) \assign \kern6pt\widetilde{\kern-7pt\int} \mp \displaystyle{\tfrac{h}{H\sigma^2 (r-H\sigma^2)^{- 2 H} \Gamma (2 H, (r-H\sigma^2) \mu) e^{(r-H\sigma^2) \mu} +e^{(r-H\sigma^2)\mu} \mathcal{C}}\, d \mu},\]
\[V^{\star} (\nu_{\pm} ; \mathcal{C}) \assign \left( \kern6pt\widetilde{\kern-7pt\int} \mp \displaystyle{\tfrac{h}{H\sigma^2 (r-H\sigma^2)^{- 2 H} \Gamma (2 H, (r-H\sigma^2) \mu) e^{(r-H\sigma^2) \mu} +e^{(r-H\sigma^2)\mu} \mathcal{C}}\, d \mu} \right)_{\mu = \nu} \]
and $\displaystyle{f(\tau)}$ satisfies Lemma \ref{lemma2} with the appropriate substitutions suggested above \emph{and} $\displaystyle{\pi(s)}$ is in the form $\displaystyle{\pi(s) = e^{-s S_t}}$. Moreover, remark that $\displaystyle{\Delta T}$ is treated as a variable prior to taking the inverse Laplace transform. The numbers associated with $\displaystyle{T}$ and $\displaystyle{t}$ are substituted \emph{after} taking the inverse Laplace transform. This is the application of our result and, thereby, concludes our section.

\section{Conclusion and Further Research} \label{conclusion}

The purpose of this section is to conclude the paper and provide readers with the  some related topics or unanswered questions worth to be further investigated.

In summary, the author have found the non-analytic solution to the general
form of the Fokker-Planck equation of fBm along with proving that it is a
solution to the Fokker-Planck equation of fBm as intended. That is, the author have found the solution to
\begin{eqnarray}
  u_t = (a t^{2 v - 1} x u)_{x x} - ((b x + c) u)_x, \,\, 0 < x < \infty, 
\end{eqnarray}
where $u = u (t, x)$ and $a, b, c, v$ are constants with $a>0$ and $v > 0$. The solution is
\begin{eqnarray}
& & u (t, x ; \xi) \\
& = & \mathfrak{L}^{- 1} \left\{ e^{- G \left( t ; \frac{1 - s a b^{- 2 v} \Gamma (2 v \nocomma, b t) e^{b t}}{s e^{bt}} \right) + G^{\star} \left( 0 ; \frac{1 - s a b^{- 2 v} \Gamma (2 v \nocomma, bt)e^{bt}}{s e^{bt}} \right)} \right. \nonumber \\ 
& & \cdot e^{- \left( \frac{s e^{bt} \xi}{1 - s e^{bt} a b^{- 2 v}  (\Gamma (2 v \nocomma, b t) - \Gamma (2 v \nocomma))} \right)} \nonumber\\
&  & + \left[e^{- G \left( t ; \frac{1 - s a b^{- 2 v} \Gamma (2 v \nocomma, b t) e^{b t}}{s e^{bt}} \right)}\right. \nonumber\\
& & \left. \left. \cdot \int_0^t \left\{ f (\tau) e^{G \left( \tau ; \frac{1 - s a b^{- 2 v} \Gamma (2 v \nocomma, b t) e^{b t}}{s e^{bt}} \right)} \right\}\, d \tau\right] \right\}\nonumber, 
\end{eqnarray}
where $\xi > 0$ is a constant,
\[ G (\mu_{\pm} ; \mathcal{C}) \assign \kern6pt\widetilde{\kern-7pt\int} \pm \frac{c}{a b^{- 2 v} \Gamma (2 v, b \mu) e^{b \mu} +e^{b\mu} \mathcal{C}}\, d \mu\]
\emph{and}
\[G^{\star} (\nu_{\pm} ; \mathcal{C}) \assign \left( \kern6pt\widetilde{\kern-7pt\int} \pm \frac{c}{a b^{- 2 v} \Gamma (2 v, b \mu) e^{b \mu} +e^{b\mu} \mathcal{C}}\, d \mu \right)_{\mu = \nu}. \]
We have also applied the derived solution to the CIR model, where the Brownian motion is replaced by fBm.

There are some unanswered questions in this research. One of the main topics worth pursuing is, of course, actually solving the integral equations in the solution. By solving the integral equations, equation (\ref{fundsol}) can become analytic. Consequentially, we will be able to solve equation (\ref{mainsol}). On one hand, in order to solve the integral equations, readers may try to expand the gamma functions (both the \textit{``upper'' incomplete gamma function} and the \textit{usual gamma function}) into a power series and then solve the integral equations. On the other hand, readers may also try to use numerical methods to solve the integral equations and, consequentially, use numerical methods to solve equation (\ref{mainsol}) as well if analytical methods are not helpful.

\section*{Acknowledgment}

The author greatly acknowledges Dr. Chatchawan Panraksa for his continued help and support during the process of drafting and publishing of this research paper.

%% The Appendices part is started with the command \appendix;
%% appendix sections are then done as normal sections
%% \appendix

%% \section{}
%% \label{}

%% If you have bibdatabase file and want bibtex to generate the
%% bibitems, please use
%%
%%  \bibliographystyle{elsarticle-num} 
%%  \bibliography{<your bibdatabase>}

%% else use the following coding to input the bibitems directly in the
%% TeX file.
%\section*{References}
\nocite{*}
\bibliographystyle{elsarticle-num}
\bibliography{elsarticle-template-num}
\end{document}